\newfont{\aj}{eufm10 at12pt}
\newfont{\ajk}{eufm10 at10pt}
\theoremstyle{plain}
\newtheorem{theorem}{Theorem}[section]
\newtheorem{lemma}[theorem]{Lemma}
\newtheorem{corollary}[theorem]{Corollary}
\theoremstyle{definition}
\newtheorem{definition}[theorem]{Definition}
\newtheorem{example}[theorem]{Example}
\numberwithin{equation}{section}
\begin{document}

%\noindent {\footnotesize\tiny Extended Abstracts of the 42$^{\rm
%nd}$ Annual Iranian Mathematics Conference\\
%5-8 September 2011, Vali-e-Asr University of  Rafsanjan, Iran, pp 00-00}\\[1.00in]
\title{Weak shadowing property for IFS}
\author [MEHDI FATEHI NIA ] {MEHDI FATEHI NIA }
\address{  Department of Mathematics,
Yazd University, P. O. Box 89195-741 Yazd, Iran \\ \tiny{\textrm{ e-mail:fatehiniam@yazd.ac.ir}}}
 \subjclass[2010]{ 37C50,37C15}

%\indent $^{*}$ Speaker
\keywords{Iterated function systems, generic property, weak shadowing, uniformly contracting.}\maketitle

\begin{abstract}
In this paper we extend the notion of weak shadowing property to parameterized iterated function systems \emph{IFS} and prove some related theorems on these notion.  It is proved that every uniformly contracting  and every uniformly expanding  IFS has the weak shadowing property. Then, as an example, we give an IFS which has the  shadowing property, but fails to have the weak shadowing property. As a main result, we show that the weak shadowing property is a generic property in the set of all iterated function systems. \end{abstract}

\section{Introduction}

The notion of shadowing is an important tool for studying properties of discrete dynamical systems. From numerical point of view, if a dynamical system has the shadowing property, then numerically obtained orbits reflect the real behavior of trajectories of the systems(see \cite{[CKY],[KP],[KS]}). The so-called \emph{weak shadowing property} is introduced and studied in \cite{[CP],[MM]} and this is proved that shadowing and weak shadowing are $C^{0}$- generic properties in $H(X)$, where $H(X)$ is the set of all homeomorphisms of a compact topological space $X$. Specially, the space $X$ is one of the following:\\
(i) a topological manifold with boundary ($dim(X)\geq 2$ if $\partial X\neq\emptyset$.),
\\
(ii) a Cartesian product of a countably infinite number of manifolds with nonempty
boundary,
\\
(iii) a Cantor set,\\
then weak shadowing is a generic property in $H(X)$ \cite{[MM]}.
\\
 Iterated function systems( \textbf{IFS}), are used for the construction of deterministic fractals and have found numerous applications, in
particular to image compression and image processing \cite{[B]}. Important notions in
dynamics like attractors, minimality, transitivity, and shadowing can be extended to
IFS (see \cite{[BV],[GG]}). Gutu and Glavan defined the shadowing property for a parameterized iterated function system and prove that if a parameterized IFS is uniformly contracting, then it has the shadowing property\cite{[GG]}. \\The present paper concerns the weak shadowing property for parameterized IFS and some important result about weak shadowing property are extended to iterated function systems.
Firstly, we introduce the weak shadowing property and conjugacy on  \textbf{IFS}. In Theorem \ref{th1}  we show that if an \textbf{IFS} has the shadowing property it has the weak shadowing property. So every uniformly contracting (expanding)  \textbf{IFS} has the weak shadowing property.  In Example \ref{ea} we give an \textbf{IFS} which has  the weak shadowing property but does not have the shadowing property. Then we prove that the weak shadowing is a generic property in $\mathcal{H}_{\Lambda}(X)$.
\section{preliminaries}
Let $(X,d)$ be a complete metric space. Let us recall that a \emph{parameterized Iterated Function System(IFS)} $\mathcal{F}=\{X; f_{\lambda}|\lambda\in\Lambda\}$ is any family of continuous mappings $f_{\lambda}:X\rightarrow X,~\lambda\in \Lambda$, where $\Lambda$ is a finite nonempty set (see\cite{[GG]}).\\ Let $T=\mathbb{Z}$ or $T=\mathbb{Z}_{+}= \{n\in \mathbb{Z}:n\geq0\}$ and $\Lambda^{T}$ denote the set
of all infinite sequences $\{\lambda_{i}\}_{i\in T}$ of symbols belonging to $\Lambda$. A typical element of $\Lambda^{\mathbb{Z}_{+}}$
can be denoted as $\sigma= \{\lambda_{0},\lambda_{1},...\}$ and we use the shorted notation $$\mathcal{F}_{\sigma_{n}}=f_{\lambda_{n}} o  ...o f_{\lambda_{1}}o f_{\lambda_{0}}.$$
\begin{definition}\cite{[GG]} A sequence $\{x_{n}\}_{n\in T}$ in $X$ is called an orbit of the \textbf{IFS} $\mathcal{F}$ if there exist $\sigma\in \Lambda^{T}$ such that $x_{n+1}=f_{\lambda_{n}}(x_{n})$, for each $\lambda_{n}\in \sigma$.\\ Given $\delta>0$, a sequence $\{x_{n}\}_{n\in T}$ in $X$ is called a $\delta-$pseudo orbit of $\mathcal{F}$ if there exist $\sigma\in\Lambda^{T}$ such that for every $\lambda_{n}\in \sigma$, we have $d(x_{n+1},f_{\lambda_{n}}(x_{n}))<\delta$.\end{definition}One says that the \textbf{IFS} $ \mathcal{F}$ has the \emph{shadowing property }(on $T$) if, given $\epsilon>0$, there exists $\delta>0$ such that for any $\delta-$pseudo orbit $\{x_{n}\}_{n\in T}$ there exist an orbit $\{y_{n}\}_{n\in T}$, satisfying the inequality $d(x_{n},y_{n})\leq \epsilon$ for all $n\in T$. In this case one says that the $\{y_{n}\}_{n\in T}$ or the point $y_{0}$, $\epsilon-$ shadows the $\delta-$pseudo orbit $\{x_{n}\}_{n\in T}$.\\
\begin{definition}One says that the \textbf{IFS} $ \mathcal{F}$ has the \emph{ weak shadowing property }(on $T$) if, given $\epsilon>0$, there exists $\delta>0$ such that for any $\delta-$pseudo orbit $\texttt{x}=\{x_{n}\}_{n\in T}$ there exist an orbit $\texttt{y}=\{y_{n}\}_{n\in T}$, satisfying $\texttt{y}\subset B_{\epsilon}(\texttt{x})$.\\ Where $B_{\epsilon}(S)$ denote the set of all $x\in X$ such that $d(x,S)<\epsilon$.  \end{definition}
 Please note that if $\Lambda$ is a set whit one member then the parameterized \textbf{IFS} $ \mathcal{F}$ is an ordinary discrete dynamical system.
In this case the shadowing property for $\mathcal{F}$ is ordinary shadowing property for a discrete dynamical system.\\The parameterized \textbf{IFS}
$ \mathcal{F}=\{X; f_{\lambda}|\lambda\in\Lambda\}$ is \emph{uniformly contracting} if there exists
$$\beta= sup_{\lambda\in\Lambda} sup_{x\neq y}\frac{d(f_{\lambda}(x),f_{\lambda}(y))}{d(x,y)} $$ and this number called also the \emph{contracting ratio},
 is less than one.\\
 Respectively, we shall say that $\mathcal{F}$ is uniformly expanding if
 $$\alpha= inf_{\lambda\in\Lambda} inf_{x\neq y}\frac{d(f_{\lambda}(x),f_{\lambda}(y))}{d(x,y)}>1. $$
 We call $\alpha$ the expanding ratio \cite{[GG]}.\\
  Suppose $f,g$ are two homeomorphism on $X$ we define $d_{0}(f,g)=\max\{d(f(x),g(x)),d(f^{-1}(x),g^{-1}(x)): x\in X\}.$
 \\
Let $\mathcal{H}_{\Lambda}(X)$ denote the set of all IFS, $ \mathcal{F}=\{X; f_{\lambda}|\lambda\in\Lambda\}$ such that each $f_{\lambda}$ is a homeomorphism and for  $ \mathcal{F}=\{X; f_{\lambda}|\lambda\in\Lambda\}$,$ \mathcal{G}=\{X; g_{\lambda}|\lambda\in\Lambda\}\in\mathcal{H}_{\Lambda}(X)$ Let
$$\rho( \mathcal{F},\mathcal{G})=\max_{\lambda,\mu \in\Lambda}\{d(f_{\lambda}(x),g_{\mu}(x)),d(f^{-1}_{\lambda}(x),g^{-1}_{\mu}(x)): x\in X\}.$$ Clearly
  $\rho$ is a complete metric on $\mathcal{H}_{\Lambda}(X)$. \\
We recall that the space $X$ is homogeneous if for $\epsilon>0$ we can find $\delta>0$ which if $\{x_{1},...,x_{n}\},\{y_{1},...,y_{n}\}\subset X$ are two sets of disjoint points satisfying $d(x_{i},y_{i})\leq \delta$,for all $1\leq i\leq n$, then there exist a homeomorphism $h:X\rightarrow X$ with $d_{0}(h,id_{X})\leq \epsilon$ and $h(x_{i})=y_{i},~1\leq i\leq n $\cite{[MM]}.
\begin{definition}
Suppose $(X,d)$ and $(Y,d^{'})$ are compact metric spaces and $\Lambda$ is a finite set. Let $ \mathcal{F}=\{X; f_{\lambda}|\lambda\in\Lambda\}$ and $ \mathcal{G}=\{Y; g_{\lambda}|\lambda\in\Lambda\}$ are two $IFS$ which $f_{\lambda}:X\rightarrow X$ and  $g_{\lambda}:Y\rightarrow Y$ are continuous maps for all $\lambda\in\Lambda$. We say that $ \mathcal{F}$ is said to be topologically conjugate to $ \mathcal{G}$  if there is a homeomorphism  $h:X\rightarrow Y$  such that $g_{\lambda}=h o f_{\lambda} o h^{-1}$, for all $\lambda\in\Lambda$. In this case, $h$ is called a topological conjugacy. \end{definition}
\section{Weak shadowing property for iterated function systems}
In this section we investigate the structure of parameterized \emph{IFS} whit the weak shadowing property.
It is well known that if $f:X\rightarrow X$ and $g:Y\rightarrow Y$ are conjugated then $f$ has the weak shadowing property if and only if so does $g$. In the next theorem we extend this property for \emph{iterated function systems }.
\begin{theorem}
Suppose $(X,d)$ and $(Y,d^{'})$ are compact metric spaces and $\Lambda$ is a finite set. Let $ \mathcal{F}=\{X; f_{\lambda}|\lambda\in\Lambda\}$ and $ \mathcal{G}=\{Y; g_{\lambda}|i\in\Lambda\}$ are two $IFS$ which $f_{\lambda}:X\rightarrow X$ and  $g_{\lambda}:X\rightarrow X$ are continuous maps for all $\lambda\in\Lambda$. Suppose that $ \mathcal{F}$ is topologically conjugate to $ \mathcal{G}$, then $\mathcal{F}$ has the weak shadowing property if and only if so does $\mathcal{G}$.
\end{theorem}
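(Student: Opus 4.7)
The plan is to transport pseudo orbits and orbits back and forth between $X$ and $Y$ via the conjugacy $h$, using the fact that $h$ and $h^{-1}$ are uniformly continuous because $X$ and $Y$ are compact. It suffices to prove one implication: if $\mathcal{F}$ has the weak shadowing property then so does $\mathcal{G}$; the reverse follows by the symmetric argument with $h^{-1}$ in place of $h$.

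Given $\epsilon > 0$, I would choose the constants in a specific order dictated by the direction of transport. Using uniform continuity of $h$, pick $\epsilon' > 0$ so that $d(x_1, x_2) < \epsilon'$ forces $d'(h(x_1), h(x_2)) < \epsilon$. Apply the weak shadowing property of $\mathcal{F}$ with tolerance $\epsilon'$ to obtain $\delta' > 0$. Finally use uniform continuity of $h^{-1}$ to pick $\delta > 0$ so that $d'(y_1, y_2) < \delta$ forces $d(h^{-1}(y_1), h^{-1}(y_2)) < \delta'$. The claim is that this $\delta$ witnesses weak shadowing of $\mathcal{G}$ for the initial $\epsilon$.

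To verify the claim, take a $\delta$-pseudo orbit $\{y_n\}_{n \in T}$ of $\mathcal{G}$ driven by $\sigma = \{\lambda_n\}$, set $x_n = h^{-1}(y_n)$, and invoke the conjugacy identity $h^{-1} \circ g_\lambda = f_\lambda \circ h^{-1}$. This transports the defining inequalities $d'(y_{n+1}, g_{\lambda_n}(y_n)) < \delta$ into $d(x_{n+1}, f_{\lambda_n}(x_n)) < \delta'$, so $\{x_n\}$ is a $\delta'$-pseudo orbit of $\mathcal{F}$ driven by the same $\sigma$. Weak shadowing of $\mathcal{F}$ then provides an orbit $\{x'_n\}$ of $\mathcal{F}$ with $\{x'_n\} \subset B_{\epsilon'}(\{x_n\})$. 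Pushing forward by $h$ yields $y'_n = h(x'_n)$, which is an orbit of $\mathcal{G}$ (again by the conjugacy), and for any $n$ the witness $m$ with $d(x'_n, x_m) < \epsilon'$ gives $d'(y'_n, y_m) < \epsilon$, so $\{y'_n\} \subset B_\epsilon(\{y_n\})$.

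There is no genuine obstacle here. The only point requiring care is the order in which the four constants $\epsilon \to \epsilon' \to \delta' \to \delta$ are chosen, so that the two uniform continuity estimates compose in the correct direction and sandwich the application of weak shadowing of $\mathcal{F}$; once that order is fixed, the rest is a routine index-by-index verification based solely on the identity $g_\lambda = h \circ f_\lambda \circ h^{-1}$.
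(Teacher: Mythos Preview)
Your proposal is correct and follows essentially the same approach as the paper: both proofs choose the chain of constants $\epsilon \to \epsilon' \to \delta' \to \delta$ via uniform continuity of $h$ and $h^{-1}$ (available by compactness), sandwich the weak shadowing modulus of $\mathcal{F}$ in the middle, and transport pseudo orbits and orbits through the conjugacy. Your write-up is in fact slightly more explicit than the paper's in justifying why $\{h^{-1}(y_n)\}$ is a $\delta'$-pseudo orbit (via the identity $h^{-1}\circ g_\lambda = f_\lambda \circ h^{-1}$) and why $\{h(x'_n)\}$ is a genuine orbit of $\mathcal{G}$.
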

\begin{proof}
Suppose that $ \mathcal{F}$ has the weak shadowing property, we prove that $\mathcal{G}$ also have this property. Fix $\epsilon>0$ and consider $h:X\rightarrow Y$ as the conjugacy map between $\mathcal{F}$ and $ \mathcal{G}$. Since $h$ is a homeomorphism then there exists $\epsilon_{1}>0$ such that $d(a,b)<\epsilon_{1}$, implies $d^{'}(h(a),h(b))<\epsilon$. Let $\delta_{1}>0$ be an $\epsilon_{1}$ modulus of weak shadowing for $ \mathcal{F}$, there is $ \delta>0$ such that $d^{'}(x,y)<\delta$ implies that $d(h^{-1}(x),h^{-1}(y))<\delta_{1}.$\\
Now, Suppose that $\mathbf{x}=\{x_{i}\}_{i\geq 0}$ is a $\delta-$pseudo orbit for $\mathcal{G}$. Then $\mathbf{x}^{'}=\{h^{-1}(x_{i})\}_{i\geq 0}$ is a $\delta_{1}-$pseudo orbit for $\mathcal{F}$. Since $\mathcal{F}$ has the weak shadowing property then there exist an orbit  $\mathbf{y^{'}}=\{y_{i}\}_{i\geq 0}$ in $\mathcal{F}$ such that $\mathbf{y}^{'}\subset B_{\epsilon_{1}}(\mathbf{x}^{'})$. So, $\mathbf{y}\subset B_{\epsilon}(\mathbf{x})$, where $\mathbf{y}=\{h(y_{i})\}_{i\geq 0}$ is an orbit of $ \mathcal{G}$.
\end{proof}
By shadowing and weak shadowing definitions for \textbf{IFS} we have the following theorem.
\begin{theorem}\label{th1}
Let $X$ be a complete  metric space, if $ \mathcal{F}=\{X; f_{\lambda}|\lambda\in\Lambda\}$ has the shadowing property then it has the weak shadowing property.
\end{theorem}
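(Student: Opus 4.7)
The plan is essentially to observe that shadowing (in the pointwise sense $d(x_n,y_n)\leq \varepsilon$) is strictly stronger than weak shadowing (which only requires each $y_n$ to lie within $\varepsilon$ of \emph{some} point of the pseudo-orbit). So the proof should be a short unpacking of the two definitions.

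First I would fix $\varepsilon>0$ and, using the shadowing property of $\mathcal{F}$, produce a $\delta>0$ such that every $\delta$-pseudo orbit $\mathbf{x}=\{x_n\}_{n\in T}$ is $\varepsilon/2$-shadowed by some orbit $\mathbf{y}=\{y_n\}_{n\in T}$; that is, $d(x_n,y_n)\leq \varepsilon/2$ for every $n\in T$. The reason for using $\varepsilon/2$ instead of $\varepsilon$ is merely bookkeeping: the definition of $B_\varepsilon(\mathbf{x})$ in the paper uses a strict inequality $d(z,\mathbf{x})<\varepsilon$, whereas the shadowing conclusion is stated with $\leq$, so a small buffer guarantees the strict inequality.

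Next, I would verify the weak shadowing conclusion. For the orbit $\mathbf{y}$ produced above and any $n\in T$, we have $d(y_n,\mathbf{x})\leq d(y_n,x_n)\leq \varepsilon/2<\varepsilon$, so $y_n\in B_\varepsilon(\mathbf{x})$. Hence $\mathbf{y}\subset B_\varepsilon(\mathbf{x})$, which is precisely what weak shadowing requires. The same $\delta$ therefore witnesses the weak shadowing property at the scale $\varepsilon$.

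Since the entire argument is a one-line implication between the two definitions once the $\varepsilon/2$ buffer is inserted, there is no serious obstacle here; the only point that is not purely formal is the passage from the non-strict inequality in the shadowing definition to the strict inequality hidden inside $B_\varepsilon(\cdot)$, and this is dealt with by the halving trick above.
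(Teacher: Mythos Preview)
Your proposal is correct and matches the paper's approach: the paper gives no formal proof at all, simply remarking that the theorem follows directly from the definitions of shadowing and weak shadowing. Your careful handling of the strict versus non-strict inequality via the $\varepsilon/2$ buffer is more than the paper provides, but the underlying idea is identical.
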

So, Theorem \ref{th1}, Theorems 2.1 and 2.2 in \cite{[GG]} we have the following results.
\begin{corollary}\label{co1}
 If a parameterized IFS $\mathcal{F} = \{X ; f_{\lambda} | \lambda\in\Lambda\}$ is uniformly
contracting, then it has the weak shadowing property on $\mathbb{Z}_{+}$.
 \end{corollary}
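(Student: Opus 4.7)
The plan is to derive this as an immediate combination of two facts already stated. The first is Theorem~2.1 of \cite{[GG]}, cited just before the corollary, which asserts that every uniformly contracting parameterized IFS $\mathcal{F}=\{X;f_{\lambda}|\lambda\in\Lambda\}$ has the (ordinary) shadowing property on $\mathbb{Z}_{+}$. The second is Theorem~\ref{th1} of the present paper, which says that the shadowing property implies the weak shadowing property. Chaining the two yields the desired conclusion.

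Spelling out the steps in order, I would fix $\epsilon>0$ and invoke Theorem~2.1 of \cite{[GG]} to obtain a $\delta>0$ such that any $\delta$-pseudo orbit $\{x_{n}\}_{n\geq 0}$ of $\mathcal{F}$ is $\epsilon$-shadowed by a genuine orbit $\{y_{n}\}_{n\geq 0}$, meaning $d(x_{n},y_{n})\leq \epsilon$ for every $n$. This termwise estimate is already stronger than what weak shadowing demands: from $d(x_{n},y_{n})\leq\epsilon$ one reads off $y_{n}\in B_{\epsilon}(\{x_{n}\})\subset B_{\epsilon}(\mathbf{x})$ for each $n$, so the orbit $\mathbf{y}$ is entirely contained in $B_{\epsilon}(\mathbf{x})$. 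This is precisely the passage carried out in Theorem~\ref{th1}, and no recomputation of the contraction estimate (the geometric sum in the contracting ratio $\beta<1$) is required here, since that work is encapsulated in the cited theorem from \cite{[GG]}.

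Because both ingredients are already available, there is essentially no obstacle. The only point worth mentioning is a quantifier check: the modulus $\delta$ delivered by the shadowing property is automatically a modulus for weak shadowing, as is immediate from the definitions. Thus the corollary follows at once by composing Theorem~2.1 of \cite{[GG]} with Theorem~\ref{th1}.
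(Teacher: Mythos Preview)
Your proof is correct and matches the paper's approach exactly: the corollary is stated immediately after the remark that it follows from Theorem~\ref{th1} together with Theorem~2.1 of \cite{[GG]}, which is precisely the combination you invoke.
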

 \begin{corollary}\label{co2}
 If a parameterized IFS $\mathcal{F} = \{X ; f_{\lambda} | \lambda\in\Lambda\}$ is uniformly
expanding and if each function $f_{\lambda} (\lambda\in\Lambda)$ is surjective, then the IFS has the
weak shadowing property on $\mathbb{Z}_{+}$.
\end{corollary}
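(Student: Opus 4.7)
The plan is to obtain this corollary as an immediate combination of two results already in hand, rather than by a direct pseudo-orbit construction. Specifically, Theorem 2.2 of \cite{[GG]} asserts that a uniformly expanding parameterized \textbf{IFS} whose constituent maps $f_\lambda$ are surjective has the shadowing property on $\mathbb{Z}_+$. Theorem \ref{th1}, just proved, says that for any parameterized \textbf{IFS} on a complete metric space, the shadowing property implies the weak shadowing property. Chaining these two implications yields the conclusion.

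In a little more detail, the proof I would write consists of the following two-step sequence. First, invoke \cite[Theorem 2.2]{[GG]} with the hypotheses of the corollary to deduce that $\mathcal{F}$ has the shadowing property on $\mathbb{Z}_+$: given $\epsilon>0$, there is some $\delta>0$ such that every $\delta$-pseudo orbit $\{x_n\}_{n\in \mathbb{Z}_+}$ is $\epsilon$-shadowed by an actual orbit $\{y_n\}_{n\in\mathbb{Z}_+}$ in the pointwise sense $d(x_n,y_n)\leq \epsilon$ for all $n$. Second, apply Theorem \ref{th1}, which formalizes the observation that pointwise $\epsilon$-shadowing trivially implies that $\{y_n\}\subset B_\epsilon(\{x_n\})$; the same $\delta$ then serves as a modulus of weak shadowing for the same $\epsilon$.

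There is essentially no obstacle here: the genuine work (the contracting-fixed-point style argument used to produce a true orbit tracking a pseudo orbit in the expanding, surjective setting) lives inside \cite[Theorem 2.2]{[GG]}, and the passage from shadowing to weak shadowing is a tautological weakening of the defining inequality. The only thing to be slightly careful about is that the surjectivity hypothesis on each $f_\lambda$ is what allows one to solve $f_{\lambda_n}(y_n)=y_{n+1}$ backwards in time and hence produces an actual orbit in the expanding case; this is precisely why the hypothesis appears in Corollary \ref{co2} but not in Corollary \ref{co1}. Once this is noted, no further computation is needed.
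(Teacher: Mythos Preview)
Your proposal is correct and follows exactly the paper's approach: the corollary is stated immediately after Theorem~\ref{th1} as a direct consequence of that theorem together with Theorem~2.2 of \cite{[GG]}, with no additional argument given. Your explanation of why surjectivity is needed is a helpful elaboration, but the core reasoning matches the paper's one-line deduction.
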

The following example shows that the inverse of Theorem\label{th1} is not true.
\begin{example}\label{ea}
Consider the unit circle $\mathbb{S}^{1}$ with coordinate $x\in[0,1)$. Suppose that $0<\beta_{1},\beta_{2}<1$ are two distinct irrational numbers and $f_{i}$ is homeomorphisms on $S^{1}$ defined by $f_{i}(x)=x+\beta_{i}$, for $i\in\{0,1\}$. Let $\mathcal{F} = \{\mathbb{S}^{1} ; f_{1}, f_{2} \}$. Since every orbit of $f_{1}$ is an orbit of $\mathcal{F}$ that is dense in $\mathbb{S}^{1}$, then $\mathcal{F}$ has the weak shadowing property\cite{[SP]}.\\
Now, suppose that $\beta_{2}-\beta_{1}=\frac{1}{2}$. We show that $\mathcal{F}$ does not have the shadowing property.\\
To obtain a contradiction, we assume that $\mathcal{F}$ has the shadowing property. Take $\epsilon=\frac{1}{5}$ and $\delta>0$ be the corresponding number for shadowing property. Let $\alpha$ be a rational number which $| \alpha- \beta_{1}|<\delta$ and  $g: S^{1}\rightarrow S^{1}$ be a homeomorphism defined by $g(x)=x+\alpha$. This is clear that every orbits of $g$ is a $\delta-$pseudo orbit of $f_{1}(x)=x+\beta$. Since $\alpha$ is a rational number, then there is $m\in \mathbb{N}$ such that $g^{m}$ is identity map. Let $\sigma= \{...,\lambda_{-1},\lambda_{0},\lambda_{1},...\}$ be an arbitrary sequence in $\{1,2\}^{\mathbb{Z}}$.\\
\textbf{Claim:} \emph{For every $p\in\mathbb{S}^{1}$, the sets $\{\mathcal{F}_{\sigma_{mk}}(p)\}_{k\geq 0}$ is dense in $\mathbb{S}^{1}$ }.\\
So, for any $x, p \in \mathbb{S}^{1}$, $\{x, g(x),...,g^{m-1}(x),...\}$ is a $\delta$-pseudo orbit of $\mathcal{F}$, but there exists $k>0$ that $d(g^{km}(x),\mathcal{F}_{\sigma_{mk}}(p))>\frac{1}{5}$.\\
Hence $\mathcal{F}$ does not have the shadowing property.
\end{example}
\textbf{Proof of Claim}. For any $n>0$, let $A_{n}=\{\lambda_{i}; \lambda_{i}=2,  1\leq i\leq n \}$ and $n_{2}$  be the cardinality of the set $ A_{n}$. Then
  $\mathcal{F}_{\sigma_{mk}}(p)=
 p+mk\beta_{1} +\frac{mk_{2}}{2}  (mod~ 1)$.
 By a similar argument to that given in ( \cite{[PM]} Example 2), we can show that   $\{\mathcal{F}_{\sigma_{mk}}(p)\}_{k\geq 0}$ is dense in $\mathbb{S}^{1}$ .
\begin{lemma}\label{le1}
Suppose $\mathbf{x}=\{x_{i}\}_{i=0}^{n}$ is a $\delta-$pseudo orbit. there exist a $2\delta-$pseudo trajectory $\mathbf{y}=\{y_{i}\}_{i=0}^{n}$ such that $\mathbf{x}\subset B_{\epsilon}(\mathbf{y})$ and $y_{i}\neq y_{j}$ for all $i\neq j$.
\end{lemma}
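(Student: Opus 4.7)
The strategy is to obtain $\mathbf{y}$ from $\mathbf{x}$ by a small inductive perturbation, tolerating a doubling of the pseudo-orbit constant in exchange for pairwise distinctness. Because $\Lambda$ is finite and each $f_{\lambda}$ is continuous on the (compact) metric space $X$, the whole family $\{f_{\lambda}\}_{\lambda\in\Lambda}$ is uniformly equicontinuous; I would fix a modulus $\eta>0$ with $\eta\le\min\{\epsilon,\delta/4\}$ such that $d(a,b)<\eta$ forces $d(f_{\lambda}(a),f_{\lambda}(b))<\delta/4$ for every $\lambda\in\Lambda$.

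Next, keeping the symbol sequence $\sigma=\{\lambda_{0},\dots,\lambda_{n-1}\}$ associated to $\mathbf{x}$, I would build $y_{0},y_{1},\dots,y_{n}$ inductively: having chosen $y_{0},\dots,y_{i-1}$, pick any
\[
y_{i}\in B_{\eta}(x_{i})\setminus\{y_{0},\dots,y_{i-1}\}.
\]
Such a choice exists at each of the $n+1$ steps provided every $\eta$-ball in $X$ is infinite, which is the standing perfectness assumption on the ambient spaces considered in the paper (manifolds with boundary, countable products of such, Cantor sets). By construction the $y_{i}$ are pairwise distinct and $d(x_{i},y_{i})<\eta\le\epsilon$, so $\mathbf{x}\subset B_{\epsilon}(\mathbf{y})$ is automatic.

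Finally, the pseudo-orbit bound follows from a single triangle-inequality estimate:
\[
d(y_{i+1},f_{\lambda_{i}}(y_{i}))\le d(y_{i+1},x_{i+1})+d(x_{i+1},f_{\lambda_{i}}(x_{i}))+d(f_{\lambda_{i}}(x_{i}),f_{\lambda_{i}}(y_{i}))<\eta+\delta+\tfrac{\delta}{4}<2\delta,
\]
where the first term is controlled by the choice of $y_{i+1}$, the second by the $\delta$-pseudo orbit hypothesis on $\mathbf{x}$, and the third by the equicontinuity modulus together with $d(x_{i},y_{i})<\eta$. Thus $\mathbf{y}$ is a $2\delta$-pseudo trajectory with the required properties.

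The only genuine obstacle is the distinctness step: if $X$ had isolated points one could be forced to repeat values. This is invisible under the ambient hypotheses the paper uses for its genericity results, so it costs nothing here; the rest is uniform-continuity bookkeeping.
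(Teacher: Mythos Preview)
Your argument is correct and follows essentially the same route as the paper: perturb each $x_i$ slightly to obtain pairwise distinct $y_i$ (using that $X$ has no isolated points), then recover the $2\delta$-pseudo-orbit estimate via the triangle inequality and uniform (equi)continuity of the finite family $\{f_\lambda\}$. You are in fact more explicit than the paper about the inductive selection of the $y_i$ and the equicontinuity modulus, but the underlying idea is identical.
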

\begin{proof}
Consider a finite sequence $\{\lambda_{i}\}_{i=0}^{n-1}\subset\Lambda$  such that $d(f_{\lambda_{i}}(x_{i},x_{i+1})<\delta$, for all $0\leq i \leq n-1$. Since $X$ is a compact space and has no isolated points then we can find a sequence $\mathbf{y}=\{y_{i}\}_{i=0}^{n}$ of distinct points such that $d(f_{\lambda}(x_{i}),y_{i})<\frac{\delta}{2}$ and $d(x_{i+1},y_{i+1})<\frac{\delta}{2}$, for all $\lambda \in \Lambda$ and all $0\leq i\leq n-1$. So $d(f_{\lambda_{i}}(y_{i}),y_{i+1})\leq d(f_{\lambda_{i}}(y_{i}),f_{\lambda_{i}}(x_{i}))+d(f_{\lambda_{i}}(x_{i}),x_{i+1})+d(x_{i+1},y_{i+1})<\frac{\delta}{2}+ \frac{\delta}{2}+\delta=2\delta$, for $0\leq i\leq n-1$.
\end{proof}
\begin{lemma}\label{le2}
Let $\mathrm{x}=\{x_{i}\}_{i\in \mathbb{Z}}$ be a sequence in $X$ and $\mathrm{x}_{n}=  \{x_{i}\}_{i=-n}^{n}$, for every $n\geq 1$. For each $\epsilon>0$ there exist $k\in\mathbb{N}$ such that $\mathrm{x}\subset B_{\epsilon}(\mathrm{x}_{k})$.
\end{lemma}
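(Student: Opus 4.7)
The plan is to exploit total boundedness of the ambient space (the paper implicitly works with compact $X$, as stated in the definition of conjugacy and used in Lemma~\ref{le1}). The idea is that although $\mathrm{x}$ is a bi-infinite sequence, its range must be approximable by finitely many of its own terms, and any such finite set of approximating terms fits inside some window $\mathrm{x}_k$.

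First I would fix $\epsilon>0$ and, by total boundedness of $X$, choose a finite cover of $X$ by open balls $B_{\epsilon/2}(p_1),\dots,B_{\epsilon/2}(p_m)$. Each term $x_i$ of the sequence lies in at least one of these balls, so the set of indices of balls actually visited by $\mathrm{x}$ is a nonempty subset $J\subset\{1,\dots,m\}$. For each $j\in J$, pick any single integer $i_j\in\mathbb{Z}$ with $x_{i_j}\in B_{\epsilon/2}(p_j)$; since $J$ is finite, so is the collection $\{i_j:j\in J\}$.

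Now set $k:=\max_{j\in J}|i_j|$, which is well defined because $J$ is finite. For any $i\in\mathbb{Z}$, the point $x_i$ lies in some $B_{\epsilon/2}(p_{j(i)})$ with $j(i)\in J$, and the same ball contains $x_{i_{j(i)}}$; the triangle inequality then gives
\[
d(x_i,x_{i_{j(i)}})\le d(x_i,p_{j(i)})+d(p_{j(i)},x_{i_{j(i)}})<\tfrac{\epsilon}{2}+\tfrac{\epsilon}{2}=\epsilon.
\]
Since $|i_{j(i)}|\le k$, this shows $x_i\in B_\epsilon(\mathrm{x}_k)$, and as $i$ was arbitrary we obtain $\mathrm{x}\subset B_\epsilon(\mathrm{x}_k)$.

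There is no real obstacle here; the only subtle point is that the lemma uses compactness of $X$ without saying so explicitly, so I would state at the outset that we are working in the standing compact metric space of the paper. Everything else is just the pigeonhole idea that a bi-infinite sequence in a totally bounded space can only ``explore'' finitely many $\epsilon/2$-neighborhoods, and finitely many witnessing indices have a finite maximum absolute value.
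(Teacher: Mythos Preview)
Your proof is correct, but it takes a different route from the paper's. The paper argues directly via the open-cover characterization of compactness: it covers $X$ by $X\setminus\overline{\mathrm{x}}$ together with the nested family $\{B_\epsilon(\mathrm{x}_n):n\ge1\}$, extracts a finite subcover, and then uses the nesting $B_\epsilon(\mathrm{x}_{n_1})\subset\cdots\subset B_\epsilon(\mathrm{x}_{n_l})$ together with $\mathrm{x}\cap(X\setminus\overline{\mathrm{x}})=\emptyset$ to conclude $\mathrm{x}\subset B_\epsilon(\mathrm{x}_{n_l})$. You instead use total boundedness: a fixed finite $\epsilon/2$-net of the whole space, followed by a pigeonhole selection of one witnessing index per occupied ball and a triangle-inequality estimate. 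Your argument is slightly more hands-on and makes the choice of $k$ explicit, while the paper's is shorter and exploits the monotonicity of the windows $\mathrm{x}_n$ directly; both rest on the same unstated compactness hypothesis, which you rightly flag.
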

\begin{proof}
This is clear that $\{X-\overline{\mathrm{x}}, B_{\epsilon}(\mathrm{x}_{n}):~n\geq 1 \}$ where $\overline{\mathrm{x}}$ is closure of $\mathrm{x}$, is an open cover for $X$ and has a finite subcover. Suppose $X\subset (X-\overline{\mathrm{x}})\cup  B_{\epsilon}(\mathrm{x}_{n_{1}})\cup...\cup B_{\epsilon}(\mathrm{x}_{n_{l}})$, for some $n_{1}< n_{2}<...<n_{l}$. Since $\mathrm{x}\cap (X-\overline{\mathrm{x}})=\emptyset$ and $B_{\epsilon}(\mathrm{x}_{n_{1}})\subset B_{\epsilon}(\mathrm{x}_{n_{2}})\subset...\subset B_{\epsilon}(\mathrm{x}_{n_{l}})$, then $\mathrm{x}\subset B_{\epsilon}(\mathrm{x}_{n_{l}})$.
\end{proof}
Next theorem is the main result of this paper and the main idea of proof is the same as that of \cite{[MM]}.
\begin{theorem}\label{th1}
If the space $X$is a generalized homogeneous then the $\mathcal{F}-$weak shadowing property is generic in $\mathcal{H}_{\Lambda}(X)$.
\end{theorem}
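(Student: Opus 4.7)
The plan is to apply the Baire category theorem in the complete metric space $(\mathcal{H}_\Lambda(X),\rho)$, following the Mazur--Mizera scheme \cite{[MM]} adapted from homeomorphisms to IFS. For each $n\ge 1$, let $W_n$ denote the set of $\mathcal{F}\in\mathcal{H}_\Lambda(X)$ for which there exists $\delta>0$ such that every $\delta$-pseudo orbit of $\mathcal{F}$ is $(1/n)$-weakly shadowed by an orbit of $\mathcal{F}$. Since the class of IFS enjoying the weak shadowing property is exactly $\bigcap_{n\ge 1}W_n$, it suffices to show that each $W_n$ is open and dense in $\mathcal{H}_\Lambda(X)$.

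Openness of $W_n$ is a routine triangle-inequality argument: if $\mathcal{F}\in W_n$ with modulus $\delta$ and $\rho(\mathcal{F},\mathcal{G})<\delta/3$, then any $(\delta/3)$-pseudo orbit of $\mathcal{G}$ with symbol sequence $\sigma$ is automatically a $\delta$-pseudo orbit of $\mathcal{F}$ with the same $\sigma$, and hence admits an $\mathcal{F}$-orbit in its $(1/n)$-neighborhood. Lemma \ref{le2} lets one reduce the set-inclusion condition $\mathbf{y}\subset B_{1/n}(\mathbf{x})$ to a sufficiently long finite window; on that window the $\mathcal{F}$-orbit can be tracked by a $\mathcal{G}$-orbit starting from the same base point and using the same symbol sequence, because $\rho$ controls both the maps and their inverses uniformly. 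A small enlargement of $1/n$ absorbs the incurred error, which one hides by passing from $W_n$ to $W_{n+1}\subset W_n$.

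Density of $W_n$ is the real work, and this is where the generalized homogeneity of $X$ enters. Given $\mathcal{F}\in\mathcal{H}_\Lambda(X)$, $n\ge 1$, and $\eta>0$, I would produce $\mathcal{G}$ with $\rho(\mathcal{F},\mathcal{G})<\eta$ and $\mathcal{G}\in W_n$ by arranging that $\mathcal{G}$ possesses a $(1/n)$-dense family of fixed or short periodic points; then any pseudo orbit $\mathbf{x}$ of $\mathcal{G}$ automatically contains some point within $1/n$ of such a periodic point $p$, and the orbit of $\mathcal{G}$ based at $p$ with the corresponding constant (or periodic) symbol sequence lies in $B_{1/n}(\mathbf{x})$. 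To build the periodic-point family I would fix a finite $(1/(3n))$-net in $X$, apply Lemma \ref{le1} to replace the relevant source--target pairs by pairwise distinct points lying within $1/(3n)$ of the originals, and then invoke the generalized homogeneity of $X$ once for each $\lambda\in\Lambda$ to produce a homeomorphism $h_\lambda$ with $d_0(h_\lambda,\mathrm{id}_X)<\eta$ that sends every designated source to its intended target. Setting $g_\lambda:=h_\lambda\circ f_\lambda$ would then yield an IFS $\mathcal{G}$ with $\rho(\mathcal{F},\mathcal{G})<\eta$ in which the chosen net points form genuine periodic orbits.

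The main obstacle is verifying the hypothesis of generalized homogeneity in the density step, namely that the source points $\{f_\lambda(z_i)\}$ can be brought within the required modulus of their chosen targets $\{z_{\pi(i)}\}$. For arbitrary $\mathcal{F}$ this is not automatic, so the construction must be executed in two coordinated stages: a preliminary perturbation that pre-arranges the relevant images close to their intended targets, followed by the homogeneity-based adjustment that seals the targets into periodic points. Coordinating the finite window given by Lemma \ref{le2}, the distinctification from Lemma \ref{le1}, and the grid resolution in $X$ so that all of these constraints fit within the single budget $\eta$ is the technical crux of the argument; once that is in place, the residual set $\bigcap_{n\ge 1}W_n$ is generic in $\mathcal{H}_\Lambda(X)$ by the Baire category theorem, concluding the proof.
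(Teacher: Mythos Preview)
Your density argument has a gap that you flag but do not close. To plant a $(1/n)$-dense family of fixed or short periodic points in $\mathcal{G}=\{h_\lambda\circ f_\lambda\}$ via generalized homogeneity, each $h_\lambda$ must carry the source $f_\lambda(z_i)$ to its designated target $z_{\pi(i)}$ while satisfying $d_0(h_\lambda,\mathrm{id}_X)<\eta$; homogeneity supplies such an $h_\lambda$ only when source and target are already within the $\eta$-modulus of one another. For an arbitrary $\mathcal{F}$ the images $f_\lambda(z_i)$ may lie at fixed positive distance from every net point, and no ``preliminary small perturbation'' can repair this: any $\mathcal{G}'$ with $\rho(\mathcal{F},\mathcal{G}')<\eta$ moves each image by at most $\eta$, which is far smaller than the gap to be bridged. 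Hence the two-stage scheme cannot manufacture the required periodic structure. (Your openness step has an analogous loose end: the $\mathcal{F}$-orbit that $(1/n)$-shadows $\mathbf{x}$ must be converted into a $\mathcal{G}$-orbit, and finite-window tracking gives no control over the infinite tail of that $\mathcal{G}$-orbit, which need not remain in $B_{1/n}(\mathbf{x})$.)

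The paper avoids both problems by a different mechanism. It never perturbs $\mathcal{F}$ to create periodic points. Instead, for a finite open cover $V=\{V_1,\dots,V_k\}$ of mesh $<\epsilon$ it records the combinatorial invariant $J_{\mathcal{F}}\subset 2^{\{1,\dots,k\}}$ of index sets $L$ realised by some $\mathcal{F}$-orbit meeting every $V_j$, $j\in L$. The Claim shows $\mathcal{F}\mapsto J_{\mathcal{F}}$ is lower-semicontinuous, so the set $C_V$ where $J$ is locally constant is open, and it is dense because in any open $W$ one picks $\mathcal{F}$ with $J_{\mathcal{F}}$ maximal among $\{J_{\mathcal{G}}:\mathcal{G}\in W\}$; no explicit perturbation is built here. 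Homogeneity enters only when checking weak shadowing for $\mathcal{F}\in C_V$, and in the opposite direction from yours: given a pseudo-orbit $\mathbf{x}$ of $\mathcal{F}$, Lemmas~\ref{le1} and~\ref{le2} plus one homogeneity homeomorphism $h$ realise a finite truncation of $\mathbf{x}$ as a genuine orbit segment of the nearby IFS $\mathcal{G}=\{h\circ f_\lambda\}$, extended to a full $\mathcal{G}$-orbit $\mathbf{z}$. Crucially, the sources $f_{\lambda_i}(y_i)$ and targets $y_{i+1}$ are already $\beta$-close by the pseudo-orbit condition itself, so homogeneity applies legitimately. Since $J_{\mathcal{G}}=J_{\mathcal{F}}$, some $\mathcal{F}$-orbit $\mathbf{z}'$ hits the same members of $V$, whence $\mathbf{z}'\subset B_{3\epsilon}(\mathbf{x})$.
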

\begin{proof}
Given $\epsilon>0$ and $V=\{V_{1},...,V_{k}\}$ be a cover of $X$ consisting of open sets with diameters less than $\epsilon$. Suppose  $\mathcal{F}\in \mathcal{H}_{\Lambda}(X)$ and take $J_{\mathcal{F}}$ is the family of sets $L\subset\{1,2,...,k\}$ such that there exist an obit $\mathrm{x}=\{x_{i}\}_{i\in \mathbb{Z}}$ of $\mathcal{F}$ satisfying $\mathrm{x}\cap V_{j}\neq\emptyset$, for all $j\in L$.\\
\textbf{Claim.} For any $\mathcal{F}\in \mathcal{H}_{\Lambda}(X)$ there is a neighborhood $U$ of $\mathcal{F}$ such that $ J_{\mathcal{F}}\subset J_{\mathcal{G}}$ for $\mathcal{F}\in U$.\\Take $$C_{V}=\{\mathcal{F}\in \mathcal{H}_{\Lambda}(X):  J_{\mathcal{F}}= J_{\mathcal{G}} for~\mathcal{G}~ sufficiently~ close~ to~ \mathcal{F}\}. $$ By definition $C_{V}$ is an open subset of $\mathcal{H}_{\Lambda}(X)$. Now we show that $C_{V}$ is dense in
$\mathcal{H}_{\Lambda}(X)$. Consider an arbitrary open set $W\subset \mathcal{H}_{\Lambda}(X)$ and $J_{\mathcal{F}}$ is a maximal element of $J_{W}=\{J_{\mathcal{F}}:\mathcal{F}\in W\}$, i.e., for every $\mathcal{G}\in W$, $J_{\mathcal{F}}\subset J_{\mathcal{G}}$ implies that $ \mathcal{F}=\mathcal{G}$. Thus, by claim $\mathcal{F}\in C_{V}\cap W$. So $C_{V}$ is an open dense subset of $\mathcal{H}_{\Lambda}(X)$.\\ Take $\mathcal{F}\in C_{V}$ we prove that $\mathcal{F}$ has the weak shadowing property. \\ Since $C_{V}$ is an open set there is $\gamma>0$ such that for $\mathcal{G}\in \mathcal{H}_{\Lambda}(X)$, $\rho( \mathcal{F},\mathcal{G})< \gamma$ implies that $J_{\mathcal{F}}= J_{\mathcal{G}}$. Suppose $\beta>0$ is a $\gamma-$modulus of homogeneity of $X$. Let $\mathrm{x}=\{x_{i}\}_{i\in \mathbb{Z}}$ be a $\frac{\beta}{2}-$pseudo orbit of $\mathcal{F}$. Because of the Lemma \ref{le2} there exist $k\in\mathbb{N}$ such that $\mathrm{x}\subset B_{\epsilon}(\mathrm{x}_{k})$.
By Lemma \ref{le1} there exist a $\beta-$pseudo trajectory $\mathbf{y}=\{y_{i}\}_{i=-n}^{n}$ ( belong to $\mathcal{F}$) such that $\mathbf{x_{k}}\subset B_{\epsilon}(\mathbf{y})$ and $y_{i}\neq y_{j}$ for all $i\neq j$. Suppose $0<\tau<\gamma$ is a number that $d(a,b)<\tau$ implies that $d(f^{-1}_{\lambda}(a),f^{-1}_{\lambda}(b))<\gamma $, for all $a,b\in X$ and all $\lambda\in \Lambda$. Also, suppose $h\in\mathcal{H}(X)$, $d_{0}(h,id_{X})<\tau$ is a homeomorphism connecting $f_{\lambda_{i}}(y_{i})$ with $y_{i+1}$ for all $-n\leq i\leq n-1$. Consider IFS $\mathcal{G}=\{X; g_{\lambda}=h o f_{\lambda}|\lambda\in\Lambda\}$  and let $\sigma= \{\mu_{0},\mu_{1},...\}$ be an arbitrary element of  $\Lambda^{\mathbb{Z}_{+}}$. So the sequence $$\mathbf{z}=\{...,g^{-1}_{\mu_{1}}(g^{-1}_{\mu_{0}}(y_{-n})),g^{-1}_{\mu_{0}}(y_{-n}),y_{-n},y_{-(n-1)},...,y_{n},g_{\mu_{0}}(y_{n}),g_{\mu_{1}}(g_{\mu_{0}}(y_{n})),...\}$$
is an orbit of $\mathcal{G}$.
\\ This is clear that $\rho( \mathcal{F},\mathcal{G})< \gamma$ and hence $J_{\mathcal{F}}= J_{\mathcal{G}}$. So there ia an orbit $\mathbf{z}^{'}$  of $\mathcal{F}$ such that for any $1\leq i\leq k$ if $\mathbf{z}^{'}\cap V_{i}\neq \emptyset$ then  $\mathbf{z}\cap V_{i}\neq \emptyset$. Thus $\mathbf{z}\subset B_{\epsilon}(\mathbf{z}^{'})$ and consequently $\mathbf{z}^{'}\subset B_{3\epsilon}(\mathbf{x})$.
\end{proof}
\textbf{Proof of Claim}: Suppose that $J_{F}=\{L_{1},L_{2},...,L_{m}\}$. For any $1\leq j\leq m$, there is an orbit $\mathrm{x}^{j}$ such that $\mathrm{x}^{j}\cap U_{j_{i}}\neq \emptyset$ for all $j_{i}\in L_{j}$. So there exist $\epsilon_{j}>0$ such that $\rho( \mathcal{F},\mathcal{G})<\epsilon_{j}$ implies that, for an orbit $\mathrm{y}^{j}$ of $ \mathcal{G}$ such that $\mathrm{y}^{j}\cap U_{j_{i}}\neq \emptyset$ for all $j_{i}\in L_{j}$. Thus $L_{j}\in $. Take $\epsilon=\min\{\epsilon_{1},\epsilon_{2},...,\epsilon_{m}\}$, similar argument shows that if $\rho( \mathcal{F},\mathcal{G})<\epsilon$ then $L_{1}\in J_{G}$ for all $1\leq j\leq m$.\\
By Theorem \ref{th1} and proof of Theorem 2 in \cite{[MM]} we have the following theorem.
\begin{theorem}  If the space $X$ is one of the following:\\
(i) a topological manifold with boundary ($dim(X)\geq 2$ if $\partial X\neq\emptyset$.),
\\
(ii) a Cartesian product of a countably infinite number of manifolds with nonempty
boundary,
\\
(iii) a Cantor set,\\
then $\mathcal{F}$-weak shadowing is a generic property in $H_{\Lambda}(X)$.
\end{theorem}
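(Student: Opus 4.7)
The plan is to reduce this statement directly to Theorem 3.5 (the genericity result for generalized homogeneous spaces) rather than reprove a genericity theorem from scratch. Theorem 3.5 already tells us that $\mathcal{F}$-weak shadowing is generic in $\mathcal{H}_{\Lambda}(X)$ whenever $X$ satisfies the homogeneity condition introduced in the preliminaries, namely that for every $\epsilon>0$ there exists $\delta>0$ such that any two disjoint $n$-tuples $\{x_{1},\dots,x_{n}\},\{y_{1},\dots,y_{n}\}\subset X$ with $d(x_{i},y_{i})\leq\delta$ can be realized by a single homeomorphism $h:X\to X$ with $d_{0}(h,id_{X})\leq\epsilon$ and $h(x_{i})=y_{i}$. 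Therefore the entire task reduces to verifying this point-moving property for each of the three classes (i)--(iii).

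For each case I would invoke exactly the verifications carried out in the proof of Theorem 2 of \cite{[MM]}, which are formulated purely in terms of the underlying space $X$ and so apply unchanged in our IFS setting. In (i), on a topological manifold (with $\dim X\geq 2$ when $\partial X\neq\emptyset$, so that targets near the boundary have room to move) one builds $h$ as a composition of finitely many compactly supported isotopies supported in pairwise disjoint small balls centered at the $x_{i}$, using standard isotopy extension. In (iii), the Cantor set case is handled by choosing a clopen partition with pieces of diameter less than $\epsilon$ fine enough that each matched pair $(x_{i},y_{i})$ lies in a common piece and the pieces containing distinct pairs are disjoint, and then defining $h$ to permute subclopens by the Cantor-set matching lemma and to be the identity elsewhere. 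In (ii), one works with a product metric such as $\sum 2^{-k}d_{k}$, concentrates the perturbation in finitely many coordinates (chosen so that the tail contribution is below $\epsilon/2$), and applies case (i) factorwise, using the nonempty boundary of every factor to accommodate one-dimensional coordinates.

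Once homogeneity is confirmed in each case, Theorem 3.5 is applied directly to yield the conclusion. The step I expect to be the genuine obstacle is case (ii): the uniformity of $\delta$ across an infinite Cartesian product is not automatic, and one must be careful to localize perturbations in a prefix of coordinates whose length depends only on $\epsilon$. However, this is precisely the delicate estimate handled in \cite{[MM]}, and because Theorem 3.5's proof invokes the homogeneity property only through the existence of the connecting homeomorphism $h$ (used in the construction of the perturbed IFS $\mathcal{G}=\{X;h\circ f_{\lambda}\mid\lambda\in\Lambda\}$), no new argument is required beyond citing that estimate.
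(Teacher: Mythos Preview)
Your proposal is correct and matches the paper's approach exactly: the paper's proof consists solely of the sentence ``By Theorem~\ref{th1} and proof of Theorem~2 in \cite{[MM]} we have the following theorem,'' i.e., it reduces to the genericity result for generalized homogeneous spaces and defers the verification of homogeneity for cases (i)--(iii) to \cite{[MM]}. Your writeup in fact supplies more detail than the paper does, but the strategy is identical.
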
%\section{$h-$average shadowing property and stability}
%\section{CONCLUSION}

\end{document}